\newcommand{\pcl}{\Omega}
\newcommand{\PC}{\Pi}
\newcommand{\CC}{\mathcal{C}}
\theoremstyle{definition}
\newtheorem{theorem}{Theorem}
\newtheorem{lemma}{Lemma}
\newtheorem{proposition}{Proposition}
\title{Tight Bounds for Potential Maximal Cliques Parameterized by Vertex Cover}
\author{Tuukka Korhonen \\ {\small \texttt{tuukka.m.korhonen@helsinki.fi}} \\ {\small \url{https://tuukkakorhonen.com}}}
\begin{document}
\date{}
\maketitle

\begin{abstract}
We show that a graph with $n$ vertices and vertex cover of size $k$ has at most $4^k + n$ potential maximal cliques.
We also show that for each positive integer $k$, there exists a graph with vertex cover of size $k$, $O(k^2)$ vertices, and $\Omega(4^k)$ potential maximal cliques.
Our results extend the results of Fomin, Liedloff, Montealegre, and Todinca [Algorithmica, 80(4):1146--1169, 2018], who proved an upper bound of $poly(n) 4^k$, but left the lower bound as an open problem.
\end{abstract}

\section{Introduction}
Potential maximal cliques (PMCs) have been used to obtain large families of exact exponential time and FPT algorithms for graph problems~\cite{DBLP:journals/algorithmica/FominLMT18,DBLP:journals/siamcomp/FominTV15,korhonen_ipec,DBLP:journals/algorithmica/LiedloffMT19}.
These algorithms enumerate all PMCs of the input graph, so their time complexity is lower bounded by the number of PMCs.
In~\cite{DBLP:journals/algorithmica/FominLMT18} it was shown that graphs have $O^*(4^k)$\footnote{The $O^*(\cdot)$ notation suppresses factors polynomial in the number of vertices.} PMCs, where $k$ is the size of minimum vertex cover.
Algorithms with time complexity $O^*(4^k)$ were obtained via a corresponding PMC enumeration algorithm.
In the conclusion of~\cite{DBLP:journals/algorithmica/FominLMT18}, the authors remark that to them, the bound $O^*(4^k)$ for PMCs does ``not seem to be tight''.
In this paper we give an upper bound of $4^k + n$ for the number of PMCs.
We show that our bound is tight up to constant factors by giving a family of graphs with $\Omega(4^k)$ PMCs and $O(k^2)$ vertices.
Note that also the term $n$ is necessary, as $n$-vertex star graphs have $k=1$ and $n-1$ PMCs.

\section{Preliminaries}
The notation $S(n, k)$ denotes the Stirling numbers of the second kind, i.e., the number of ways to partition an $n$-element set into $k$ non-empty parts.

We consider graphs that are undirected and simple.
We denote the vertices of a graph $G$ with $V(G)$ and edges with $E(G)$.
The neighbors of a vertex $v$ are $N(v)$ and the neighborhood of a vertex set $X$ is $N(X) = \cup_{v \in X} N(v)$.
The closed neighborhood is denoted by $N[v] = N(v) \cup \{v\}$.
The set of vertex sets of connected components of $G$ is denoted by $\CC(G)$.
For a vertex subset $X \subseteq V(G)$, $G[X]$ denotes the subgraph of $G$ induced by $X$ and $G \setminus X$ the subgraph of $G$ induced by $V(G) \setminus X$.
A full component of a vertex set $X \subseteq V(G)$ is a component $C \in \CC(G \setminus X)$ such that $N(C) = X$.

A vertex subset is a PMC if it is a maximal clique in some minimal triangulation~\cite{DBLP:journals/siamcomp/BouchitteT01}.
Our proofs are based on the following characterization of PMCs.

\begin{proposition}[\cite{DBLP:journals/siamcomp/BouchitteT01}]
\label{pro:pmc}
Let $G$ be a graph.
A set $\pcl \subseteq V(G)$ is a PMC of $G$ if and only if
\begin{enumerate}
\item for each $C \in \CC(G \setminus \pcl)$, $N(C) \subsetneq \pcl$, i.e., $\pcl$ has no full components, and
\item for each pair of distinct vertices $u,v \in \pcl$, either $(u, v) \in E(G)$ or there is a component $C \in \CC(G \setminus \pcl)$ with $\{u,v\} \subseteq N(C)$.
\end{enumerate}
\end{proposition}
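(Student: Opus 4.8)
The plan is to prove both implications of the characterization, reconstructing the classical argument of~\cite{DBLP:journals/siamcomp/BouchitteT01}. For the ``if'' direction I would build, from a set $\pcl$ satisfying (1) and (2), an explicit minimal triangulation of $G$ having $\pcl$ as a maximal clique. The natural construction: on vertex set $V(G)$, make $\pcl$ a clique, and for each $C \in \CC(G \setminus \pcl)$ make $N(C)$ a clique and recursively replace $G[N[C]]$ by a minimal triangulation of it with $N(C)$ already completed. This recursion is well-founded because (1) gives $N(C) \subsetneq \pcl$, so each $N[C]$ misses the nonempty set $\pcl \setminus N(C)$ and is a proper vertex subset; the base case $\CC(G \setminus \pcl) = \emptyset$ means $\pcl = V(G)$, where (2) forces $G$ (hence its only triangulation) to be complete.

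Call the resulting graph $H$. Then $\pcl$ is a clique cutset of $H$ whose removal leaves exactly the components in $\CC(G \setminus \pcl)$ (no fill edge joins two of them, nor joins $\pcl \setminus N(C)$ to $C$), so $H$ is chordal iff each $H[\pcl \cup C]$ is; and $H[\pcl \cup C]$ is obtained from the recursive minimal triangulation $H[N[C]]$ of $G[N[C]]$ by adding the vertices of $\pcl \setminus N(C)$, each of which is simplicial (its neighborhood is the clique $\pcl$ minus itself), hence $H[\pcl \cup C]$ is chordal. Moreover $\pcl$ is a \emph{maximal} clique of $H$: every $H$-neighbor of a vertex $c \in C$ lies in $N[C]$, so $c$ being adjacent to all of $\pcl$ would force $\pcl = N(C)$, contradicting (1). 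For \emph{minimality} one checks that $H - e$ is non-chordal for every fill edge $e$: fill edges inside some $G[N[C]]$ are handled by induction (a witnessing chordless cycle there stays chordless in $H - e$, since $\pcl$-clique edges add no chords inside $N[C]$), and for $e = uv$ with $u,v \in \pcl$, $uv \notin E(G)$, condition (2) supplies a component $C$ with $u,v \in N(C)$; then for a shortest $u$--$v$ path $u, c_1, \dots, c_r, v$ through $C$ (so $c_i \in C$, $r \ge 1$) and any $w \in \pcl \setminus N(C)$ (nonempty by (1)), the cycle $u, c_1, \dots, c_r, v, w, u$ is chordless in $H - uv$ — the $c_i$'s form an induced path, $w$ avoids every $c_i$ as $w \notin N(C)$, and $uv$ is exactly the missing edge. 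This proves the ``if'' direction.

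For the ``only if'' direction, let $\pcl$ be a maximal clique of a minimal triangulation $H$. Condition (1) needs only chordality: in a clique tree of $H$, for each component $C^*$ of $H \setminus \pcl$ the set $N_H(C^*)$ is the minimal separator labelling the tree edge that joins $\pcl$ to the subtree containing $C^*$, and this is a proper subset of $\pcl$; and for $C \in \CC(G \setminus \pcl)$ contained in such a $C^*$ we get $N_G(C) \subseteq N_H(C) \cap \pcl \subseteq N_H(C^*) \subsetneq \pcl$. Condition (2) is where minimality enters: if $u,v \in \pcl$ with $uv \notin E(G)$, then $uv$ is a fill edge, so $H - uv$ is non-chordal, and any chordless cycle of $H - uv$ must contain both $u$ and $v$ (else it would already be chordless in $H$) with them non-consecutive, splitting into two internally disjoint $u$--$v$ paths; one then argues that at least one of these paths has all internal vertices outside $\pcl$ and, after rerouting its fill edges through $G$, stays within a single component $C \in \CC(G \setminus \pcl)$, whence $u,v \in N_G(C)$.

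The step I expect to be the main obstacle is exactly this last one: converting the $H$-path between $u$ and $v$, which may traverse fill edges, into an honest $G$-path confined to a single component of $G \setminus \pcl$. The clean way around it is to invoke the structural theory of minimal triangulations as completions of maximal families of pairwise non-crossing minimal separators of $G$~\cite{DBLP:journals/siamcomp/BouchitteT01}: every fill edge of $H$ then joins two vertices of some such minimal separator, and the minimal separators contained in $\pcl$ are precisely the sets $N_G(C)$ for $C \in \CC(G \setminus \pcl)$, which yields condition (2) immediately (and also makes the minimality check in the ``if'' direction transparent). Carried out without that machinery, the fill-edge rerouting is where the real work lies.
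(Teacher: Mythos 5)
This proposition is imported verbatim from Bouchitt\'e and Todinca and the paper gives no proof of it, so there is no in-paper argument to compare yours against; what follows is an assessment of your reconstruction on its own terms. Your ``if'' direction is essentially the classical clique-sum construction and is sound, with one repair needed: the $u$--$v$ path $u, c_1, \dots, c_r, v$ through $C$ must be induced in $H$, not merely a shortest path of $G$, because the recursive triangulation of $G[N[C]]$ may add fill edges among the $c_i$ (and between them and $u,v$) that become chords of your cycle. Taking instead a chordless $u$--$v$ path in $H[\{u,v\} \cup C] - uv$ (which exists since $C$ is connected in $H$ and $u,v \in N_H(C)$) fixes this, and the rest --- $w \in \pcl \setminus N(C)$ having no $H$-edge into $C$, and the Rose--Tarjan--Lueker criterion that checking single fill-edge removals suffices for minimality --- goes through.

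In the ``only if'' direction you have correctly located the real difficulty, and you are right that it does not disappear by elementary rerouting: the chordless cycle of $H - uv$ is, by chordality of $H$, a $4$-cycle $u,a,v,b$ with $ab \notin E(H)$, and the edges $ua, av$ may themselves be fill edges, so the vertex $a \notin \pcl$ does not directly witness a component of $G \setminus \pcl$ seeing both $u$ and $v$. However, your claim that the minimal-separator machinery then yields condition (2) ``immediately'' still hides a step. From $H = G_{\mathcal{S}}$ for a maximal set $\mathcal{S}$ of pairwise parallel minimal separators you get that $u,v$ lie in a common $S \in \mathcal{S}$; to produce the required component you must additionally argue that $S$ has a full component $D$ of $G \setminus S$ disjoint from $\pcl$ --- for instance because $\pcl \setminus S$ is a nonempty clique of $H$, hence lies in a single component of $G \setminus S$ (the components of $G \setminus S$ and of $G_{\mathcal{S}} \setminus S$ coincide for parallel separators), so at least one of the two full components of $S$ avoids $\pcl$. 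Then $D \subseteq C$ for some $C \in \CC(G \setminus \pcl)$ and $u,v \in N_G(D) \subseteq N_G(C) \cup C$ forces $u,v \in N_G(C)$. With that step supplied, your outline becomes a complete proof.
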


We refer to the first condition of Proposition~\ref{pro:pmc} as the \emph{no full component condition} and to the second condition as the \emph{cliquish condition}.
We call a PMC $\pcl$ of $G$ free if each of its vertices is adjacent to a vertex in $V(G) \setminus \pcl$.

\begin{lemma}
\label{lem:nonfree_ct}
If a PMC $\pcl$ is not free, then $\pcl = N[v]$ for some vertex $v$.
\end{lemma}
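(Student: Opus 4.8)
The plan is to unpack the definition of "not free" and then apply the cliquish condition of Proposition~\ref{pro:pmc} to each pair involving the offending vertex. Since $\pcl$ is not free, by definition there is a vertex $v \in \pcl$ that is adjacent to no vertex of $V(G) \setminus \pcl$; equivalently, $N(v) \subseteq \pcl$, and hence $N[v] \subseteq \pcl$. So one inclusion is immediate and all the work goes into showing $\pcl \subseteq N[v]$.

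For the reverse inclusion, I would take an arbitrary vertex $u \in \pcl$ with $u \neq v$ and invoke the cliquish condition on the pair $\{u,v\}$. One of two things happens: either $(u,v) \in E(G)$, in which case $u \in N(v) \subseteq N[v]$ and we are done for this $u$; or there is a component $C \in \CC(G \setminus \pcl)$ with $\{u,v\} \subseteq N(C)$. The point is that the second alternative is impossible: $v \in N(C)$ would mean $v$ has a neighbor inside $C$, and since $C \subseteq V(G) \setminus \pcl$ this contradicts $N(v) \subseteq \pcl$. Therefore the first alternative always holds, giving $u \in N(v)$ for every $u \in \pcl \setminus \{v\}$, i.e., $\pcl \subseteq N(v) \cup \{v\} = N[v]$. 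Combining the two inclusions yields $\pcl = N[v]$.

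I do not expect a genuine obstacle here — the argument is a direct two-case application of the cliquish condition, and notably the no full component condition is not even needed. The only points that require a moment of care are (i) correctly reading "not free" as the existence of a vertex $v \in \pcl$ with $N(v) \subseteq \pcl$ (so in particular $v$ itself lies in $\pcl$), and (ii) observing that $N[v] \subseteq \pcl$ is already forced, so that establishing $\pcl \subseteq N[v]$ suffices to conclude equality rather than mere containment.
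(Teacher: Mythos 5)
Your proof is correct and follows exactly the paper's argument: the paper likewise picks a vertex $v \in \pcl$ with no neighbor outside $\pcl$, notes $N[v] \subseteq \pcl$ by definition, and derives $\pcl \subseteq N[v]$ from the cliquish condition. You have merely spelled out the two-case analysis that the paper leaves implicit.
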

\begin{proof}
Let $v$ be a vertex of $\pcl$ that is not adjacent to any vertex in $V(G) \setminus \pcl$.
We have $N[v] \subseteq \pcl$ by definition and $N[v] = \pcl$ follows from the cliquish condition.
\end{proof}

In particular, the number of non-free PMCs is at most $n$, the number of vertices.

\section{Upper Bound}
Let $G$ be a graph and $V_k \subseteq V(G)$ a vertex cover of $G$ of size $k$.
We first reduce the task of upper bounding the number of PMCs of $G$ to this task in an induced supergraph $M(G, V_k)$ of $G$.
Then we show that the PMCs of $M(G, V_k)$ have a relatively simple structure that we can handle with some case analysis.

The following lemma allows us to reduce to an induced supergraph.

\begin{lemma}[\cite{DBLP:journals/tcs/BouchitteT02}]
\label{lem:ind_sub}
Let $G$ be a graph and $X \subseteq V(G)$.
The number of PMCs of $G[X]$ is at most the number of PMCs of $G$.
\end{lemma}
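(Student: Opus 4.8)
The plan is to prove the statement in the special case $|V(G)\setminus X|=1$ and then iterate. Writing $G[X]=G\setminus v$, I would build an injection from the PMCs of $G\setminus v$ into the PMCs of $G$ via
\[
f(\pcl)=\begin{cases}\pcl & \text{if $\pcl$ is a PMC of $G$,}\\ \pcl\cup\{v\} & \text{otherwise.}\end{cases}
\]
Injectivity of $f$ is almost automatic: every PMC of $G\setminus v$ avoids $v$, so a set returned by the first branch never contains $v$ while a set returned by the second always does; within either branch $f$ is obviously injective, so there can be no collisions. Hence the entire content of the proof is to show that $f$ is well defined, i.e.\ that $\pcl\cup\{v\}$ really is a PMC of $G$ whenever $\pcl$ is a PMC of $G\setminus v$ that is not a PMC of $G$.

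The key is to understand how the connected components change. Fix a PMC $\pcl$ of $G\setminus v$ and let $C$ be the component of $G\setminus\pcl$ that contains $v$. Every component of $G\setminus\pcl$ other than $C$ avoids $v$ and is non-adjacent to $v$, hence is also a component of $(G\setminus v)\setminus\pcl$ with unchanged neighborhood; and $C\setminus\{v\}$ splits into exactly those components $D_1,\dots,D_m$ of $(G\setminus v)\setminus\pcl$ that are adjacent to $v$, so that $N_G(D_i)=N_{G\setminus v}(D_i)\cup\{v\}$. From these relations together with Proposition~\ref{pro:pmc} applied to $\pcl$ in $G\setminus v$ one checks that (a) the cliquish condition for $\pcl$ in $G$ still holds, since any component of $(G\setminus v)\setminus\pcl$ witnessing a non-edge either survives in $G\setminus\pcl$ or is absorbed into $C$, whose neighborhood only grows, and (b) no component of $G\setminus\pcl$ other than $C$ can be a full component of $\pcl$ in $G$. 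Therefore $\pcl$ fails to be a PMC of $G$ precisely when $N_G(C)=\pcl$, which we may now assume.

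It remains to verify the two conditions of Proposition~\ref{pro:pmc} for $\pcl':=\pcl\cup\{v\}$ in $G$. Since $\CC(G\setminus\pcl')=\CC((G\setminus v)\setminus\pcl)$, these components are $D_1,\dots,D_m$ together with the components $E_1,\dots,E_p$ of $G\setminus\pcl$ other than $C$. For the no-full-component condition: $N_G(E_j)=N_{G\setminus v}(E_j)\subsetneq\pcl\subsetneq\pcl'$ because $\pcl$ is a PMC of $G\setminus v$, and $N_G(D_i)=N_{G\setminus v}(D_i)\cup\{v\}$ with $N_{G\setminus v}(D_i)\subsetneq\pcl$, so $N_G(D_i)\subsetneq\pcl'$. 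For the cliquish condition: a pair $u,w\in\pcl$ is handled by the cliquish condition for $\pcl$ in $G\setminus v$, whose witnessing component lies in $\CC((G\setminus v)\setminus\pcl)=\CC(G\setminus\pcl')$ and has only a larger neighborhood in $G$; and a pair $\{v,w\}$ with $w\in\pcl=N_G(C)$ is handled by observing that $w$ has a neighbor in $C=\{v\}\cup\bigcup_i D_i$, so either $w$ is adjacent to $v$ or $w\in N_G(D_i)$ for some $i$, and $v\in N_G(D_i)$ as well. This shows $\pcl'$ is a PMC of $G$, finishing the one-vertex case, and the general statement follows by deleting the vertices of $V(G)\setminus X$ one at a time. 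The step demanding the most care is the component bookkeeping of the middle paragraph — keeping straight the three systems $\CC(G\setminus\pcl)$, $\CC((G\setminus v)\setminus\pcl)$, and $\CC(G\setminus\pcl')$ — but none of it is deep.
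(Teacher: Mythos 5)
The paper does not prove this lemma at all---it imports it from the cited reference \cite{DBLP:journals/tcs/BouchitteT02}---so there is no in-paper proof to compare against. Your argument is correct and is essentially the standard proof from that reference: the one-vertex-removal dichotomy (for a PMC $\pcl$ of $G \setminus v$, either $\pcl$ or $\pcl \cup \{v\}$ is a PMC of $G$, according to whether the component of $G \setminus \pcl$ containing $v$ is full), the component bookkeeping, and the iteration over $V(G) \setminus X$ all check out, including the edge case where $v$ is the whole of its component.
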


Now we add $2^k-1$ vertices to $G$ to get $M(G, V_k)$.
In particular, for each non-empty subset $X \subseteq V_k$ we add a vertex $M_X$ with $N(M_X) = X$.
The vertices $V_k$ form a vertex cover of $M(G, V_k)$, and we refer to them as the inner vertices of $M(G, V_k)$.
We refer to other vertices of $M(G, V_k)$ as the outer vertices.

Now we start to expose the structure of PMCs of $M(G, V_k)$.
\begin{lemma}
Let $\pcl$ be a PMC of $M(G, V_k)$.
The vertices $V_k$ intersect at most three distinct connected components of $M(G, V_k) \setminus \pcl$.
\end{lemma}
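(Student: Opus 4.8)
The plan is to argue by contradiction using the auxiliary outer vertices. Suppose $C_1, C_2, C_3, C_4$ are four pairwise distinct connected components of $M(G, V_k) \setminus \pcl$, each containing a vertex of $V_k$, and pick $b_i \in C_i \cap V_k$ for $i \in \{1,2,3,4\}$. These four vertices are distinct, and none of them lies in $\pcl$ since each belongs to a component of $M(G, V_k) \setminus \pcl$. The first step is to note that for every pair $i \neq j$ the outer vertex $M_{\{b_i, b_j\}}$ must belong to $\pcl$: otherwise, since $N(M_{\{b_i, b_j\}}) = \{b_i, b_j\}$ and all three of $M_{\{b_i, b_j\}}, b_i, b_j$ lie outside $\pcl$, the vertex $M_{\{b_i, b_j\}}$ would be adjacent to both $b_i$ and $b_j$ in $M(G, V_k) \setminus \pcl$, forcing them into a common component and contradicting $C_i \neq C_j$.

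The second step is to extract structural information from the fact that $M_{\{b_1, b_2\}} \in \pcl$ while its only two neighbors $b_1, b_2$ lie outside $\pcl$; hence $M_{\{b_1, b_2\}}$ is not adjacent to any vertex of $\pcl$. Applying the cliquish condition to $M_{\{b_1, b_2\}}$ and an arbitrary other vertex $w \in \pcl$, there must be a component $C \in \CC(M(G, V_k) \setminus \pcl)$ with $\{M_{\{b_1, b_2\}}, w\} \subseteq N(C)$; since $C$ contains a neighbor of $M_{\{b_1, b_2\}}$, it is $C_1$ or $C_2$, so $w \in N(C_1) \cup N(C_2)$. Therefore $\pcl \setminus \{M_{\{b_1, b_2\}}\} \subseteq N(C_1) \cup N(C_2)$.

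The final step is to reach the contradiction. By the first step $M_{\{b_3, b_4\}} \in \pcl$, and $M_{\{b_3, b_4\}} \neq M_{\{b_1, b_2\}}$ because $b_1, b_2, b_3, b_4$ are distinct, so the second step gives $M_{\{b_3, b_4\}} \in N(C_1) \cup N(C_2)$. But the only neighbors of $M_{\{b_3, b_4\}}$ are $b_3 \in C_3$ and $b_4 \in C_4$, and these components are distinct from $C_1$ and $C_2$, so $M_{\{b_3, b_4\}}$ has no neighbor in $C_1 \cup C_2$, i.e.\ $M_{\{b_3, b_4\}} \notin N(C_1) \cup N(C_2)$ --- a contradiction.

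I expect the middle step to be the crux: the key realization is that an outer vertex $M_{\{b_i, b_j\}}$ that is forced into $\pcl$ has no neighbor at all inside $\pcl$, so the cliquish condition by itself confines all of $\pcl$ except $M_{\{b_i,b_j\}}$ to $N(C_i) \cup N(C_j)$, and then a third (let alone a fourth) component carrying an inner vertex cannot be accommodated. The remaining work is just the bookkeeping ensuring the $b_i$'s, and hence the $M_{\{b_i, b_j\}}$'s, are genuinely distinct, so that the membership and non-membership claims do not collide.
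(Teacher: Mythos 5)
Your proof is correct and follows essentially the same route as the paper: both arguments force the outer vertices $M_{\{b_1,b_2\}}$ (the paper uses $M_{P_1 \cup P_2}$) and $M_{\{b_3,b_4\}}$ into $\pcl$ and then observe that this pair cannot satisfy the cliquish condition because their neighborhoods lie in disjoint components. Your middle step merely unpacks in more detail why the cliquish condition fails for that pair, which the paper states in one line.
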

\begin{proof}
Suppose that there are four disjoint subsets $P_1, P_2, P_3, P_4$ of $V_k$, each a subset of a distinct connected component of $M(G, V_k) \setminus \pcl$.
The vertices $M_{P_1 \cup P_2}$ and $M_{P_3 \cup P_4}$ are both in $\pcl$, because otherwise $P_1$ would be connected to $P_2$ and $P_3$ to $P_4$.
However, the pair of vertices $M_{P_1 \cup P_2}$, $M_{P_3 \cup P_4}$ violates the cliquish condition because the neighborhoods of them are in different connected components of $M(G, V_k) \setminus \pcl$, which is a contradiction.
\end{proof}

Now, given a PMC $\pcl$, the inner vertices $V_k$ can be partitioned into at most four parts: the intersection $V_k \cap \pcl$, and at most three connected subsets.
We denote this partition by $P(\pcl) = \{V_k \cap \pcl, P_1, P_2, P_3\}$, or by subsets of it if some parts are missing.
Our goal is to show that given this partition, we can (almost) determine for each vertex of $M(G, V_k)$ whether it is in $\pcl$ or not.

First, fixing $V_k \cap \pcl$ determines which of the inner vertices are in $\pcl$.
Second, if the neighborhood of an outer vertex intersects multiple of the components $P_i$, then it must be in the PMC because otherwise these would not be disjoint components.
Third, we have the following lemma.

\begin{lemma}
\label{lem:subsv}
Let $\pcl$ be a PMC of $M(G, V_k)$ and $P_i \subseteq V_k \setminus \pcl$ a part of $P(\pcl)$.
If $v$ is an outer vertex of $M(G, V_k)$ with $N(v) \subseteq P_i$, then $v \notin \pcl$.
\end{lemma}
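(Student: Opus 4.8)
\emph{Proof strategy.} The plan is a proof by contradiction: assume $v \in \pcl$, and derive that some connected component of $M(G, V_k) \setminus \pcl$ is a full component, contradicting Proposition~\ref{pro:pmc}.

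First I would establish two simple facts. By the construction of $P(\pcl)$, the part $P_i$ is a \emph{connected} subset of $V_k$ contained in a single connected component $C$ of $M(G, V_k) \setminus \pcl$; since $N(v) \subseteq P_i$, every neighbor of $v$ therefore lies in $C$. Second, since $P_i \subseteq V_k \setminus \pcl$, the vertex $v$ has no neighbor inside $\pcl$, so $v$ is non-adjacent to every other vertex of $\pcl$.

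Now I would push this through the cliquish condition. If $\pcl = \{v\}$ we are in the one genuinely degenerate case; it forces $v$ to be isolated in $M(G, V_k)$ (otherwise the component containing a neighbor of $v$ has neighborhood exactly $\{v\} = \pcl$, already violating the no full component condition for $\{v\}$), which we may discard by assuming without loss of generality that $G$, and hence $M(G, V_k)$, has no isolated vertices — an isolated vertex only contributes the singleton PMC $\{v\}$, which is already among the at most $n$ non-free PMCs counted via Lemma~\ref{lem:nonfree_ct}. Otherwise, pick any $u \in \pcl \setminus \{v\}$. Since $uv \notin E(M(G, V_k))$, the cliquish condition yields a component $C' \in \CC(M(G, V_k) \setminus \pcl)$ with $\{u, v\} \subseteq N(C')$; but $v \in N(C')$ means $C'$ contains a neighbor of $v$, and every neighbor of $v$ lies in $C$, so $C' = C$ and thus $u \in N(C)$. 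As $u$ was arbitrary, $\pcl \setminus \{v\} \subseteq N(C)$, and $v \in N(C)$ as well since $v \in \pcl$ has a neighbor in $C$. Hence $\pcl \subseteq N(C)$. Because $N(C) \subseteq \pcl$ holds for every component $C$ of $M(G, V_k) \setminus \pcl$, we conclude $N(C) = \pcl$, i.e., $C$ is a full component of $\pcl$, contradicting the no full component condition.

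I do not expect a real obstacle; the argument is short once the cliquish condition is invoked on the pair $u, v$. The only points that need a little care are the degenerate singleton case handled above, and keeping track that $N(C)$ is taken as the boundary of $C$ so that the derived equality $N(C) = \pcl$ is exactly the failure of the no full component condition.
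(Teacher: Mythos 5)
Your proof is correct and takes essentially the same route as the paper's: assume $v \in \pcl$, observe that all neighbors of $v$ lie in the single component containing $P_i$, and use the cliquish condition to force that component's neighborhood to be all of $\pcl$, contradicting the no full component condition. The paper's version is a two-line sketch of exactly this argument; your extra handling of the degenerate singleton/isolated-vertex case is harmless additional care but not needed for how the lemma is subsequently applied.
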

\begin{proof}
Suppose that $v \in \pcl$.
All neighbors of $v$ are in a same component of $M(G, V_k) \setminus \pcl$.
Therefore, if $\pcl$ would satisfy the cliquish condition, then $P_i$ would be a full component, which is a contradiction.
\end{proof}

By Lemma~\ref{lem:subsv}, $P(\pcl)$ determines the outer vertices $v$ with $N(v) \subseteq P_i$ for $i \in \{1,2,3\}$.
Now, to determine outer vertices $v$ with $N(v) \subseteq V_k \cap \pcl$ we assume that the PMC $\pcl$ is free.
It follows that these vertices must not be in the PMC because otherwise the PMC would subsume their neighborhood, and therefore be non-free.

What is left undetermined are outer vertices whose neighborhood intersects $V_k \cap \pcl$ and a single part $P_i$.
To finish the argument, we consider different cases based on what parts there are in $P(\pcl)$.

If $P(\pcl) = \{P_1, P_2, P_3\}$ we are already ready, and the number of such PMCs is $S(k, 3)$.
There are no PMCs with $P(\pcl) = \{P_1, P_2\}$ or $P(\pcl) = \{P_1\}$ because they would have a full component.
Also, there are no free PMCs $\pcl$ with $P(\pcl) = \{V_k \cap \pcl\}$ because the vertex $M_{V_k}$ would be forced in $\pcl$ making it non-free.
The remaining cases are that we have a non-empty part $V_k \cap \pcl$, and at least one but at most three other parts $P_i$.

\begin{lemma}
\label{lem:p1}
For a fixed partition $\{V_k \cap \pcl, P_1\}$ of $V_k$, there are at most $k$ free PMCs $\pcl$ of $M(G, V_k)$ such that $P(\pcl) = \{V_k \cap \pcl, P_1\}$.
\end{lemma}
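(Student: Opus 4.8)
The plan is to pin down a free PMC $\pcl$ with $P(\pcl) = \{V_k \cap \pcl, P_1\}$ up to a single choice of vertex. Write $A = V_k \cap \pcl$; note that here both $A$ and $P_1$ are non-empty. By the reductions established just before the lemma — the inner vertices of $\pcl$ are exactly $A$; an outer vertex $v$ with $N(v) \subseteq P_1$ is excluded by Lemma~\ref{lem:subsv}; an outer vertex $v$ with $N(v) \subseteq A$ (in particular an isolated one) is excluded because $\pcl$ is free and would otherwise subsume $N[v]$ — every such $\pcl$ has the form $\pcl = A \cup S$ with $S \subseteq U$, where $U$ is the set of outer vertices whose neighbourhood meets both $A$ and $P_1$. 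So it suffices to bound the number of admissible sets $S$.

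Next I would read off the structure of $M(G, V_k) \setminus \pcl$. There is a single component $C_1$ containing $P_1$; it also contains every vertex of $U \setminus S$ and every outer vertex with neighbourhood inside $P_1$ (each is adjacent to $P_1$), while every other vertex outside $\pcl$ is an outer vertex with neighbourhood inside $A$ and forms its own singleton component. Since every vertex of $S$ is adjacent to $P_1 \subseteq C_1$, we have $S \subseteq N(C_1) \subseteq \pcl$, so the no full component condition applied to $C_1$ forces $D := A \setminus N(C_1)$ to be non-empty. From the cliquish condition I would extract a second fact: for $v \in S$ and $u \in A$ with $u \notin N(v)$, the pair $u, v$ needs a common component of $M(G, V_k) \setminus \pcl$; but the only component adjacent to $v$ is $C_1$ (outer vertices are pairwise non-adjacent), hence $u \in N(C_1)$. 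Equivalently, $D \subseteq N(v)$ for every $v \in S$.

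The crucial step — and the one I expect to carry the argument — is that $D$ must be a singleton. Fix $a \in D$ and any $p \in P_1$, and look at the outer vertex $M_{\{a,p\}}$, which belongs to $U$ since $N(M_{\{a,p\}}) = \{a,p\}$ meets both $A$ and $P_1$. It cannot lie in $U \setminus S$: it is adjacent to $p$, so it would lie in $C_1$, forcing $a \in N(C_1)$ and contradicting $a \in D$. Hence $M_{\{a,p\}} \in S$, so by the cliquish fact $D \subseteq N(M_{\{a,p\}}) = \{a,p\}$; since $D \subseteq A$ and $p \notin A$, this gives $D = \{a\}$.

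Finally I would show $\pcl$ is determined by $D$. Writing $D = \{a^*\}$, I claim $S = \{w \in U : a^* \in N(w)\}$: the forward inclusion is the cliquish fact, and for the reverse inclusion note that $w \in U \setminus S$ would put $w$ in $C_1$, so $a^* \in N(w)$ would give $a^* \in N(C_1)$, contradicting $a^* \in D$. Thus the map $\pcl \mapsto D$ is injective on the free PMCs in question, and its image is a set of singletons of $A$, of which there are $|A| \le k$. This yields the bound of $k$.
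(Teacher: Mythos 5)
Your proof is correct and follows essentially the same route as the paper: both identify the component $C_1 \supseteq P_1$, use the no-full-component condition to find a vertex of $V_k \cap \pcl$ outside $N(C_1)$, and show via the cliquish condition that fixing such a vertex determines which of the undetermined outer vertices lie in $\pcl$. Your extra observation that $D = (V_k \cap \pcl) \setminus N(C_1)$ is in fact a singleton (via the vertex $M_{\{a,p\}}$) is a nice refinement the paper does not need, since the bound already follows from any choice of $v \in D$ determining $\pcl$.
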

\begin{proof}
Let $C_1$ be the component of $M(G, V_k) \setminus \pcl$ with $P_1 \subseteq C_1$.
To satisfy the no full component condition, there must at least one vertex $v \in V_k \cap \pcl$ that is not in $N(C_1)$.
Next we prove that fixing such a vertex $v$ uniquely determines $\pcl$.
Each outer vertex $u$ whose neighborhood intersects both $P_1$ and $v$ must be in $\pcl$.
Each outer vertex $u$ whose neighborhood intersects $P_1$ and $V_k \cap \pcl$ but not $v$ must not be in $\pcl$ because otherwise it would violate the cliquish condition with $v$.
\end{proof}

By Lemma~\ref{lem:p1}, there are at most $k 2^k$ free PMCs $\pcl$ that induce a partition of type $P(\pcl) = \{V_k \cap \pcl, P_1\}$.

\begin{lemma}
\label{lem:p2}
For a fixed partition $\{V_k \cap \pcl, P_1, P_2\}$ of $V_k$, there are at most $2k$ free PMCs $\pcl$ of $M(G, V_k)$ such that $P(\pcl) = \{V_k \cap \pcl, P_1, P_2\}$.
\end{lemma}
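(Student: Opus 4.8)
The plan is to follow the proof of Lemma~\ref{lem:p1} closely, adding one structural observation that keeps the count at $2k$ rather than the $k^2$ a naive two-fold application of that argument would give. By the discussion preceding Lemma~\ref{lem:p1}, once the partition $\{V_k\cap\pcl,P_1,P_2\}$ is fixed and $\pcl$ is assumed free, the membership in $\pcl$ of every outer vertex of $M(G,V_k)$ is already determined except for those whose neighborhood meets $V_k\cap\pcl$ together with exactly one of $P_1$ and $P_2$; I will call these the $P_1$-type and $P_2$-type vertices. Hence such a PMC $\pcl$ is completely determined by which $P_1$-type and which $P_2$-type vertices it contains, and it suffices to bound by $2k$ the number of valid ways to make these choices. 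Throughout, let $C_1$ and $C_2$ be the components of $M(G,V_k)\setminus\pcl$ containing $P_1$ and $P_2$.

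The key new step is to show that no such PMC contains both a $P_1$-type vertex and a $P_2$-type vertex. Suppose $t$ is $P_1$-type, $u$ is $P_2$-type, and both lie in $\pcl$. They are non-adjacent, since the outer vertices of $M(G,V_k)$ form an independent set ($V_k$ being a vertex cover), so the cliquish condition requires a component $C$ of $M(G,V_k)\setminus\pcl$ with $\{t,u\}\subseteq N(C)$. Since $N(t)\subseteq(V_k\cap\pcl)\cup P_1$ with $N(t)\cap P_1\neq\emptyset$, and $P_1$ lies entirely inside the single component $C_1$, the only component meeting $N(t)$ is $C_1$; thus $C=C_1$. But $N(u)\subseteq(V_k\cap\pcl)\cup P_2$ is disjoint from $C_1$, so $u\notin N(C_1)$, a contradiction.

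Given this dichotomy, I would split into two symmetric cases. Suppose first that $\pcl$ contains no $P_2$-type vertex. Then the only vertices of $\pcl$ not adjacent to $C_1$ are those of $(V_k\cap\pcl)\setminus N(C_1)$: the $P_1$-type vertices of $\pcl$ and the outer vertices forced into $\pcl$ because their neighborhood meets both $P_1$ and $P_2$ are all adjacent to $P_1\subseteq C_1$. Hence the no-full-component condition for $C_1$ produces a vertex $v_1\in(V_k\cap\pcl)\setminus N(C_1)$, and, exactly as in Lemma~\ref{lem:p1}, fixing $v_1$ pins down $\pcl$: a $P_1$-type vertex $t$ lies in $\pcl$ if and only if $v_1\in N(t)$, because if $t\in\pcl$ but $v_1\notin N(t)$ then the non-adjacent pair $t,v_1$ violates the cliquish condition ($C_1$ being the only component meeting $N(t)$, while $v_1\notin N(C_1)$), whereas if $t\notin\pcl$ then $N(t)\cap P_1\neq\emptyset$ forces $t\in C_1$, so $N(t)\cap(V_k\cap\pcl)\subseteq N(C_1)$ and in particular $v_1\notin N(t)$. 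Thus this case contributes at most $k$ PMCs, one per choice of $v_1\in V_k\cap\pcl$; the case with no $P_1$-type vertex in $\pcl$ is symmetric and contributes at most $k$; and by the dichotomy every valid free $\pcl$ with this partition falls into at least one of the two cases, giving the bound $2k$.

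I expect the main obstacle to be getting the dichotomy of the second paragraph exactly right, in particular the observation that a $P_1$-type vertex of $\pcl$ has $C_1$ as its unique ``component neighbor'', which relies on $P_1$ lying in a single component and on $P_1$-type vertices having no neighbors in $P_2$. The remainder is a routine adaptation of Lemma~\ref{lem:p1}; I would also briefly check the degenerate situations where there are no $P_1$-type or no $P_2$-type vertices at all, but there the count only decreases.
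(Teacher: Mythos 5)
Your proof is correct and takes essentially the same route as the paper's: you first show that a free PMC with this partition cannot contain both a $P_1$-type and a $P_2$-type outer vertex (giving the factor of $2$), and then reduce each of the two resulting cases to the argument of Lemma~\ref{lem:p1} (giving the factor of $k$). The paper states both steps much more tersely; your write-up just supplies the details it leaves implicit.
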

\begin{proof}
Recall that only the outer vertices whose neighborhoods intersect $V_k \cap \pcl$ and exactly one of $P_1$ and $P_2$ are undetermined.
Consider two such vertices, one whose neighborhood intersects $P_1$ and one whose neighborhood intersects $P_2$.
They both cannot be in $\pcl$ because they would violate the cliquish condition.
Now we have two options, to add only neighbors of $P_1$ in $\pcl$ or only neighbors of $P_2$.
Let's assume that we choose $P_1$ and multiply the count by two.
Now we are left with the exact same situation as in Lemma~\ref{lem:p1}, and by the same arguments we get at most $k$ PMCs.
\end{proof}

By Lemma~\ref{lem:p2}, there are at most $6k \cdot S(k, 3)$ free PMCs $\pcl$ that induce a partition of type $P(\pcl) = \{V_k \cap \pcl, P_1, P_2\}$.

\begin{lemma}
\label{lem:p3}
For a fixed partition $\{V_k \cap \pcl, P_1, P_2, P_3\}$ of $V_k$, there is at most one free PMC $\pcl$ of $M(G, V_k)$ such that $P(\pcl) = \{V_k \cap \pcl, P_1, P_2, P_3\}$.
\end{lemma}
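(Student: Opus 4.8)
The plan is to follow the same line of reasoning used in Lemmas~\ref{lem:p1} and~\ref{lem:p2}, but observe that with three parts $P_1, P_2, P_3$ there is no freedom left at all. Recall from the discussion preceding Lemma~\ref{lem:p1} that once $P(\pcl) = \{V_k \cap \pcl, P_1, P_2, P_3\}$ is fixed, every outer vertex is determined to be in or out of $\pcl$ except those whose neighborhood meets $V_k \cap \pcl$ and exactly one of the $P_i$. So I only need to argue that the membership of each such ``ambiguous'' vertex is forced.

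First I would fix an ambiguous outer vertex $u$, and without loss of generality say $N(u)$ intersects $V_k \cap \pcl$ and $P_1$ (but neither $P_2$ nor $P_3$). The key claim is that $u \notin \pcl$. To see this, suppose $u \in \pcl$. Then $u$ has a neighbor in $P_1$, which lies in the component $C_1 \in \CC(M(G,V_k)\setminus\pcl)$ containing $P_1$, so all neighbors of $u$ lie in $C_1$; in particular $N(u) \cap (V_k\cap\pcl) \subseteq N(C_1)$. Now pick any vertex $w \in N(u) \cap P_2$-side reasoning fails since $N(u)$ misses $P_2$; instead I use the cliquish condition directly. Take the vertex $M_{P_2 \cup P_3}$: it must lie in $\pcl$ (otherwise $P_2$ and $P_3$ would be connected through it), and its neighborhood is contained in $C_2 \cup C_3$, the union of the two other components. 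For the cliquish condition to hold for the pair $u, M_{P_2\cup P_3}$, since they are non-adjacent there must be a component of $M(G,V_k)\setminus\pcl$ containing both — impossible, since $N(u) \subseteq C_1$ while $N(M_{P_2\cup P_3}) \subseteq C_2 \cup C_3$. This contradiction shows $u \notin \pcl$, and the same argument works for every ambiguous vertex regardless of which $P_i$ its neighborhood meets (using $M_{P_j \cup P_\ell}$ for the two indices $j,\ell$ different from $i$). Hence the membership of every outer vertex is forced by $P(\pcl)$, so there is at most one such PMC.

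The main obstacle I anticipate is making sure the cliquish-condition argument is airtight in the boundary situation: I must confirm that $M_{P_2 \cup P_3}$ really is forced into $\pcl$ (which needs $P_2$ and $P_3$ to be genuinely nonempty — true, since they are parts of the partition) and that its neighborhood genuinely avoids the component $C_1$ (true because $N(M_{P_2\cup P_3}) = P_2 \cup P_3 \subseteq C_2 \cup C_3$, and $C_1, C_2, C_3$ are distinct components). Everything else — the observation that inner vertices, vertices with neighborhood in a single $P_i$, vertices straddling two $P_i$'s, and (using freeness) vertices with neighborhood inside $V_k\cap\pcl$ are all already determined — was established in the text before Lemma~\ref{lem:p1}, so I can simply invoke it. A short remark that $k \geq 3$ in this case (otherwise there are not three nonempty parts), so there is no degenerate subcase to worry about, completes the picture.
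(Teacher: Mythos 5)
Your proof is correct and takes essentially the same approach as the paper: the ambiguous outer vertex $u$ is forced out of $\pcl$ because the pair $u$, $M_{P_j \cup P_\ell}$ (for the two indices $j,\ell$ distinct from $i$) would violate the cliquish condition, their neighborhoods lying in disjoint components. The extra details you verify --- that $M_{P_j \cup P_\ell}$ is itself forced into $\pcl$ and that all other outer vertices are already determined by the discussion preceding Lemma~\ref{lem:p1} --- are exactly what the paper leaves implicit.
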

\begin{proof}
Let $v$ be an outer vertex whose neighborhood intersects $V_k \cap \pcl$ and $P_i$.
The vertex $v$ cannot be in $\pcl$, because it would violate the cliquish condition with $M_{P_j \cup P_l}$ where $i,j,l$ are distinct.
\end{proof}

By Lemma~\ref{lem:p3}, there are at most $4 \cdot S(k, 4)$ free PMCs $\pcl$ that induce a partition of type $P(\pcl) = \{V_k \cap \pcl, P_1, P_2, P_3\}$.

\begin{theorem}
A graph with vertex cover of size $k$ has at most $4^k + n$ PMCs.
\end{theorem}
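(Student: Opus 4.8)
The plan is to combine the reduction to $M(G, V_k)$ with the structural lemmas above and finish with a short counting argument. First I would apply Lemma~\ref{lem:ind_sub} with the set $V(G)$, noting that $G$ is exactly the subgraph of $M := M(G, V_k)$ induced by $V(G)$; hence the number of PMCs of $G$ is at most the number of PMCs of $M$, and it remains to bound the latter. I then split the PMCs of $M$ into free and non-free ones. By Lemma~\ref{lem:nonfree_ct}, every non-free PMC of $M$ is of the form $N[v]$ for a vertex $v$ of $M$, so there are at most $|V(M)| = n + 2^k - 1$ of them.

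For the free PMCs of $M$ I would use the case analysis on the type of $P(\pcl)$ built up above. A free PMC $\pcl$ has $P(\pcl)$ equal to one of $\{P_1, P_2, P_3\}$, $\{V_k \cap \pcl, P_1\}$, $\{V_k \cap \pcl, P_1, P_2\}$, or $\{V_k \cap \pcl, P_1, P_2, P_3\}$, since the types $\{P_1\}$, $\{P_1,P_2\}$, $\{V_k \cap \pcl\}$ yield no free PMC. These contribute, respectively, at most $S(k,3)$ free PMCs; at most $k 2^k$ (Lemma~\ref{lem:p1}, over fewer than $2^k$ partitions); at most $6k\, S(k,3)$ (Lemma~\ref{lem:p2}, over $3S(k,3)$ partitions); and at most $4\, S(k,4)$ (Lemma~\ref{lem:p3}, over $4S(k,4)$ partitions). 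Summing, $M$ has at most $S(k,3) + k 2^k + 6k\, S(k,3) + 4\, S(k,4)$ free PMCs.

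Adding the two bounds, the number of PMCs of $G$ is at most
\[
(6k+1)\, S(k,3) + 4\, S(k,4) + (k+1)2^k - 1 + n,
\]
so it suffices to prove
\[
(6k+1)\, S(k,3) + 4\, S(k,4) + (k+1)2^k - 1 \le 4^k
\]
for every positive integer $k$.

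The only non-structural step is this final inequality, and it is the part requiring genuine care, since it is fairly tight: although the left-hand side is asymptotically about $\tfrac{1}{6}4^k$, it comes within a few percent of $4^k$ around $k = 6$. I would verify it by substituting the closed forms $S(k,3) = (3^k - 3\cdot 2^k + 3)/6$ and $S(k,4) = (4^k - 4\cdot 3^k + 6\cdot 2^k - 4)/24$; the inequality then reduces to one with a polynomial-in-$k$ coefficient times exponentials and dominant term $4^k$ on the right, so it holds for all sufficiently large $k$ and the finitely many remaining values can be checked directly. (Crude estimates such as $S(k,j) \le j^k / j!$ are too lossy for small $k$, where $S(k,3)$ and $S(k,4)$ are zero or tiny, so the exact formulas --- or an explicit check up to about $k = 10$ --- seem necessary.)
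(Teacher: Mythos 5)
Your proposal is correct and follows essentially the same route as the paper: reduce to $M(G,V_k)$ via Lemma~\ref{lem:ind_sub}, bound non-free PMCs by $n+2^k-1$ via Lemma~\ref{lem:nonfree_ct}, sum the free-PMC counts $S(k,3)+k2^k+6k\,S(k,3)+4\,S(k,4)$ from the case analysis, and verify the resulting numerical inequality (the paper does this by checking small $k$ and noting $4\,S(k,4)\le 4^k/6$, which matches your plan). Your explicit warning that the inequality is tight near $k=6$ and that crude Stirling-number estimates do not suffice is a accurate and slightly more careful rendering of the same final step.
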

\begin{proof}
Let $G$ be a graph with vertex cover $V_k$ of size $k$.
By Lemma~\ref{lem:ind_sub} it suffices to bound the number of PMCs of $M(G, V_k)$.
The number of vertices of $M(G, V_k)$ is $n + 2^k-1$, so the number of non-free PMCs of it is at most $n + 2^k - 1$ by Lemma~\ref{lem:nonfree_ct}.
By Lemmas~\ref{lem:p1}, \ref{lem:p2}, and \ref{lem:p3}, the number of free PMCs of $M(G, V_k)$ is at most $S(k, 3) + k 2^k + 6k \cdot S(k, 3) + 4 \cdot S(k, 4)$.
By computing the values for small $k$ and observing that $4 \cdot S(k, 4) \le 4^k / 6$ we can verify that this sums up to at most $4^k + n$ in total.
\end{proof}

\section{Lower Bound}
We consider graph $G_k$, whose vertex set is $[k] \cup \binom{[k]}{2}$ consisting of the set $[k]$ of positive integers up to $k$ and the set $\binom{[k]}{2}$ of unordered pairs of positive integers up to $k$.
The edge set of $G_k$ is defined by connecting each vertex $(i,j) \in \binom{[k]}{2}$ to vertices $i$ and $j$ in $[k]$.
Therefore both $[k]$ and $\binom{[k]}{2}$ are independent sets and $[k]$ is a vertex cover of $G_k$.

We first show that $G_k$ has at least $S(k, 3) = \Omega(3^k)$ free PMCs $\pcl$ with $\pcl \cap [k] = \emptyset$.
Then we extend this bound by induction to show that for each $i \le k$, $G_k$ has at least $S(k-i, 3)$ free PMCs $\pcl$ with $\pcl \cap [k] = [i]$.
By symmetry and binomial theorem, this implies the lower bound $\Omega(4^k)$.

\begin{lemma}
\label{lem:base}
The graph $G_k$ has at least $S(k, 3)$ free PMCs $\pcl$ with $\pcl \cap [k] = \emptyset$.
\end{lemma}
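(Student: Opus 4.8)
The plan is to exhibit an explicit injection from the set of partitions of $[k]$ into exactly three non-empty parts into the set of free PMCs $\pcl$ of $G_k$ with $\pcl \cap [k] = \emptyset$. Given a partition $\{A_1, A_2, A_3\}$ of $[k]$, the natural candidate is
\[
\pcl(A_1,A_2,A_3) = \{(i,j) \in \binom{[k]}{2} : i,j \text{ lie in different parts}\},
\]
i.e.\ the set of all "crossing" pairs. First I would check that $\pcl \cap [k] = \emptyset$ by construction, and that $M(G_k,V_k)$ is irrelevant here since we work directly in $G_k$; the vertices of $G_k$ outside $\pcl$ are exactly $[k]$ together with the "non-crossing" pairs $(i,j)$ with $i,j$ in the same part.

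Next I would verify the two conditions of Proposition~\ref{pro:pmc}. For the cliquish condition: take two distinct vertices $(i,j), (i',j') \in \pcl$; since $[k]$ and $\binom{[k]}{2}$ are independent sets they are non-adjacent, so I must find a component $C$ of $G_k \setminus \pcl$ containing both in its neighborhood. The components of $G_k \setminus \pcl$ are governed by the parts $A_1, A_2, A_3$: each part $A_t$, together with all non-crossing pairs inside it, forms a connected component $C_t$ with $N(C_t) \supseteq \{$all crossing pairs touching $A_t\}$ — provided $|A_t| \ge 2$, and if $|A_t| = 1$ the single vertex of $A_t$ is an isolated-ish component whose neighborhood is the crossing pairs through it. The key point is that every crossing pair $(i,j)$ has $i \in A_s$, $j \in A_t$ with $s \ne t$, so it lies in $N(C_s) \cap N(C_t)$; hence for any two vertices of $\pcl$ I can pick one of the (at least two) parts they respectively touch and, if they happen to share a common part, use that part's component to witness cliquishness, and otherwise use a careful pigeonhole among the three parts. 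This case analysis — handling parts of size one versus larger, and pairs of $\pcl$-vertices that share zero, one, or two parts — is the step I expect to be the main obstacle, though it is finite and mechanical. For the no-full-component condition: each component $C_t$ has $N(C_t)$ equal to the set of crossing pairs through $A_t$, which is a proper subset of $\pcl$ as long as $k \ge 3$ so that there exists a crossing pair avoiding $A_t$ entirely (one between the other two parts); hence no full component.

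Finally I would check freeness: each vertex $(i,j) \in \pcl$ is a crossing pair with $i \in A_s$, $j \in A_t$, and both $i$ and $j$ lie in $V(G_k) \setminus \pcl$ (they are in $[k]$), so $(i,j)$ is adjacent to a vertex outside $\pcl$. Distinct partitions $\{A_1,A_2,A_3\}$ give distinct sets of crossing pairs — the partition is recoverable from $\pcl$ as the connected components of the graph on $[k]$ whose edges are the non-crossing pairs, i.e.\ $\binom{[k]}{2} \setminus \pcl$ — so the map is injective, and since there are $S(k,3)$ such partitions the lemma follows. I would close by noting the small-$k$ cases ($k \le 2$, where $S(k,3) = 0$) are vacuous.
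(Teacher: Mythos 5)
Your proposal is correct and matches the paper's proof: the same set of ``crossing pairs'' construction, the same recovery of the tripartition from $\pcl$ for injectivity, and the same verification of the two PMC conditions plus freeness. The case analysis you anticipate as the main obstacle dissolves once you note that every vertex of $\pcl$ lies in $P_i \times P_j$ and is therefore adjacent to exactly two of the three components, so any two vertices of $\pcl$ share an adjacent component because two $2$-element subsets of $\{1,2,3\}$ must intersect.
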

\begin{proof}
Consider a tripartition $P = \{P_1,P_2,P_3\}$ of $[k]$.
Construct a PMC $\pcl$ such that $(i, j) \in \pcl$ if and only if $i$ and $j$ are in different parts of $P$.
Note that from such $\pcl$ we can uniquely recover $P$ because vertices in $\binom{[k]}{2} \setminus \pcl$ are grouped into three components.
Now it suffices to prove that $\pcl$ is indeed a PMC, and it follows that the number of such PMCs is at least the number of tripartitions, i.e., $S(k, 3)$.

The graph $G_k \setminus \pcl$ has exactly tree connected components, each consisting of vertices $P_i \cup \binom{P_i}{2}$ for $i \in \{1,2,3\}$.
None of the components are full, because for the component $P_i \cup \binom{P_i}{2}$ each vertex of form $v \in P_j \times P_l$ (with $i,j,l$ distinct) is in $\pcl$ but not in the neighborhood of $P_i \cup \binom{P_i}{2}$.
Each vertex $v \in \pcl$ has exactly two adjacent components because it is of form $v \in P_i \times P_j$ (with $i \neq j$).
Therefore $\pcl$ is free, and any pair of vertices in $\pcl$ has a common adjacent component so $\pcl$ satisfies the cliquish condition.
\end{proof}

Let $\PC_f(G_k, i)$ denote the set of free PMCs $\pcl$ of $G_k$ with $\pcl \cap [k] = [i]$.
By Lemma~\ref{lem:base} we have $|\PC_f(G_k, 0)| \ge S(k, 3)$.

\begin{lemma}
\label{lem:ind}
It holds that $|\PC_f(G_k, i)| \ge |\PC_f(G_{k-1}, i-1)|$.
\end{lemma}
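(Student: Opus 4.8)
The plan is to construct an injection from $\PC_f(G_{k-1}, i-1)$ into $\PC_f(G_k, i)$. The natural idea is to take a free PMC $\pcl'$ of $G_{k-1}$ with $\pcl' \cap [k-1] = [i-1]$, identify $G_{k-1}$ with the induced subgraph of $G_k$ on vertices $[k-1] \cup \binom{[k-1]}{2}$ (those pairs not involving $k$), and then add the new inner vertex $k$ to the PMC together with all the pairs $(j,k)$ for $j \in [k-1]$ that are needed to keep the set a clique. More precisely, I would set $\pcl = \pcl' \cup \{k\} \cup \{(j,k) : j \in [k-1] \setminus \pcl'\}$, so that the new vertex $k$ is made adjacent (within $\pcl$, via the common component witnessing cliquishness, or via the added pair vertices) to every other vertex of $\pcl$. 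Note $\pcl \cap [k] = [i-1] \cup \{k\}$, which is not $[i]$ unless $i = k$; so I will actually use the symmetry of $G_k$ under permutations of $[k]$ and relabel so that the distinguished inner vertex plays the role of $i$ rather than $k$. Equivalently, one identifies $G_{k-1}$ with the subgraph of $G_k$ on $([k] \setminus \{i\}) \cup \binom{[k] \setminus \{i\}}{2}$ and relabels $[i-1]$ suitably; the cleanest writeup is probably to add vertex $i$ back in and track the bijection of labels carefully.

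The key steps, in order, would be: (1) fix the identification of $G_{k-1}$ with the appropriate induced subgraph of $G_k$ and describe the map $\pcl' \mapsto \pcl$ explicitly; (2) verify $\pcl \cap [k] = [i]$ (after relabeling) and that $\pcl$ is free — freeness of the old part is inherited since the relevant non-$\pcl$ neighbors still exist in $G_k$, and each newly added pair $(j,k)$ has its other endpoint $j \notin \pcl$, while the vertex $k$ itself has a neighbor outside $\pcl$ because not all pairs through $k$ are in $\pcl$ (here we need $[k-1] \setminus \pcl' \neq \emptyset$, which holds since $\pcl'$ is a free PMC of $G_{k-1}$ and hence cannot contain all of $[k-1]$, as that would force a full component or non-freeness — this should be argued from Proposition~\ref{pro:pmc} and Lemma~\ref{lem:nonfree_ct}); (3) check the no full component condition for $\pcl$ in $G_k$ using that it held for $\pcl'$ in $G_{k-1}$ and that the new vertices $k$ and $(j,k)$ do not create a full component; (4) check the cliquish condition: old pairs are handled by the old witnesses, a pair $\{k, x\}$ with $x$ an old vertex is handled either by an edge (if $x = (j,k) \in \pcl$ or $x = j$ with $(j,k)\in\pcl$... wait, $k \notin \pcl'$ so $j \in \pcl$ forces checking) or by a common component, and a pair $\{(j,k), x\}$ similarly; (5) observe injectivity, since $\pcl'$ is recovered from $\pcl$ by deleting vertex $k$ and all pairs incident to $k$ and intersecting with the old vertex set.

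I expect the main obstacle to be verifying the no full component condition and the cliquish condition for $\pcl$ in $G_k$ cleanly, because adding the vertex $k$ potentially changes the connected component structure of the graph after deleting the PMC: the pair vertices $(j,k)$ with $j \notin \pcl'$ that were \emph{not} added to $\pcl$ become isolated-ish degree-one vertices attached to $j$, and one must check these do not merge two old components or create a full component, and one must check that the component(s) now seeing $k$ still do not equal $\pcl$. The freeness of $k$ relies on the combinatorial fact that $[k-1] \not\subseteq \pcl'$, which itself needs a short argument. A secondary subtlety is bookkeeping the relabeling so that the image genuinely lands in $\PC_f(G_k, i)$ rather than $\PC_f(G_k, \text{something else})$; using the symmetry of $G_k$ under $\mathrm{Sym}([k])$ up front and only at the end specializing the label of the new inner vertex keeps this manageable. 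Once these verifications are in place the injection, and hence the inequality $|\PC_f(G_k, i)| \ge |\PC_f(G_{k-1}, i-1)|$, follows immediately.
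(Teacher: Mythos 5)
There is a genuine gap: the set you construct is not, in general, a PMC of $G_k$. The paper's map is simply $\pcl = \pcl' \cup \{k\}$, adding \emph{no} pair vertices to the PMC; your map additionally puts $(j,k)$ into $\pcl$ for every $j \in [k-1]\setminus\pcl'$, and this breaks the cliquish condition. Concretely, take $k-1=6$ and $i-1=0$, and let $\pcl'$ be the free PMC of $G_6$ from Lemma~\ref{lem:base} for the tripartition $\{1,2\},\{3,4\},\{5,6\}$, i.e., $\pcl'$ is the set of all cross pairs. Your $\pcl$ then contains both $(1,7)$ and $(3,5)$. These are non-adjacent, and the components of $G_7\setminus\pcl$ are exactly $\{1,2,(1,2)\}$, $\{3,4,(3,4)\}$, $\{5,6,(5,6)\}$, since every pair involving $7$ and every cross pair is in $\pcl$. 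The only component whose neighborhood contains $(1,7)$ is the first (its other neighbor, $7$, lies in $\pcl$), while $(3,5)$ is adjacent only to the second and third. No component sees both, so the cliquish condition of Proposition~\ref{pro:pmc} fails and your $\pcl$ is not a PMC at all, let alone an element of $\PC_f(G_7,1)$.

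The underlying misconception is that $k$ needs the pairs $(j,k)$ \emph{inside} $\pcl$ in order to be ``connected'' to the rest of $\pcl$; in fact it needs them \emph{outside}. A pair $(j,k)$ left in $G_k\setminus\pcl$ lies in (or forms) a component whose neighborhood contains both $k$ and $j$, and via the component of $j$ it also links $k$ to the pair vertices of $\pcl'$ (each such $(a,b)\in\pcl'$ has $a\notin\pcl'$ or $b\notin\pcl'$ by freeness); this is exactly how the cliquish condition is witnessed in the paper. Moreover $(1,k)\notin\pcl$ immediately witnesses freeness of $k$, so the auxiliary claim $[k-1]\not\subseteq\pcl'$ that you flag as a needed sublemma never arises. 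Your symmetry/relabelling bookkeeping and the injectivity observation are fine and match the paper; it is the construction itself that must change.
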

\begin{proof}
Let $\pcl'$ be a free PMC of $G_{k-1}$ with $\pcl' \cap [k-1] = [i-1]$.
We claim that $\pcl = \pcl' \cup \{k\}$ is a free PMC of $G_k$.
The inequality follows from this claim by permuting the indices appropriately.

First, note that $G_k \setminus \pcl$ can be obtained from $G_{k-1} \setminus \pcl'$ by adding the vertices $(j, k)$ for $j<k$.
These vertices have degree one in $G_k \setminus \pcl$, so their addition can not create a full component, so $\pcl$ does not have a full component in $G_k$ because $\pcl'$ does not have a full component in $G_{k-1}$.
For the cliquish condition, the vertex $k$ is connected to each vertex in $[i-1]$ via the new vertices $(j,k)$.
Other vertices in $\pcl'$ are of form $(a,b)$, but either $a \notin \pcl'$ or $b \notin \pcl'$ because $\pcl'$ is free, so $k$ can reach them via either $(a, k)$ or $(b,k)$.
Finally, $\pcl$ is free because $(1, k) \notin \pcl$.
\end{proof}

\begin{theorem}
The number of PMCs of $G_k$ is $\Omega(4^k)$.
\end{theorem}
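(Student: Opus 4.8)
The plan is to combine Lemmas~\ref{lem:base} and~\ref{lem:ind} into the bound $|\PC_f(G_k,i)|\ge S(k-i,3)$ for all $0\le i\le k$, then invoke the permutation symmetry of $G_k$ to also account for the $\binom{k}{i}$ choices of which $i$ vertices of $[k]$ are placed in the PMC, and finally evaluate the resulting binomial sum. First I would prove $|\PC_f(G_k,i)|\ge S(k-i,3)$ by induction on $i$: the base case $i=0$ is Lemma~\ref{lem:base}, and for the inductive step, Lemma~\ref{lem:ind} gives $|\PC_f(G_k,i)|\ge|\PC_f(G_{k-1},i-1)|$, while the induction hypothesis applied to $G_{k-1}$ gives $|\PC_f(G_{k-1},i-1)|\ge S((k-1)-(i-1),3)=S(k-i,3)$.

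Next I would use the fact that any permutation $\sigma$ of $[k]$ extends to an automorphism of $G_k$ via $(a,b)\mapsto(\sigma(a),\sigma(b))$, which carries a free PMC $\pcl$ with $\pcl\cap[k]=[i]$ to a free PMC whose intersection with $[k]$ is $\sigma([i])$. Hence for each subset $A\subseteq[k]$ with $|A|=i$ there are at least $|\PC_f(G_k,i)|\ge S(k-i,3)$ free PMCs $\pcl$ of $G_k$ with $\pcl\cap[k]=A$. Free PMCs attached to distinct sets $A$ are distinct, since their intersections with $[k]$ differ, so summing over all $i$ and all size-$i$ subsets $A$ gives
\[
  \#\{\text{PMCs of }G_k\}\ \ge\ \sum_{i=0}^{k}\binom{k}{i}S(k-i,3)\ =\ \sum_{m=0}^{k}\binom{k}{m}S(m,3).
\]

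To finish, I would evaluate this sum. Using the elementary inequality $S(m,3)\ge\tfrac16(3^m-3\cdot 2^m)$, which holds for every $m\ge0$, the binomial theorem yields $\sum_{m=0}^{k}\binom{k}{m}S(m,3)\ge\tfrac16\big((1+3)^k-3(1+2)^k\big)=\tfrac16(4^k-3^{k+1})=\Omega(4^k)$; equivalently one can use the exact identity $\sum_{m=0}^{k}\binom{k}{m}S(m,3)=\tfrac16(4^k-3^{k+1}+3\cdot2^k-1)$. I do not anticipate a real obstacle, as everything after the induction is bookkeeping. The only two points requiring a little care are checking that the PMC families indexed by distinct subsets $A$ are genuinely disjoint, which is immediate from $\pcl\cap[k]=A$, and handling the small values of $m$ where $S(m,3)=0$ but the closed form for Stirling numbers leaves a constant; the inequality $S(m,3)\ge\tfrac16(3^m-3\cdot2^m)$ absorbs that discrepancy.
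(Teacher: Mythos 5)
Your proof is correct and follows essentially the same route as the paper: the same induction combining Lemmas~\ref{lem:base} and~\ref{lem:ind} to get $|\PC_f(G_k,i)|\ge S(k-i,3)$, the same symmetry argument (which you helpfully make explicit via automorphisms), and the same binomial sum, differing only in that you bound it via $S(m,3)\ge\tfrac16(3^m-3\cdot 2^m)$ where the paper uses $S(k-i,3)\ge 3^{k-i}/27$ for $k-i\ge 3$.
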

\begin{proof}
By combining Lemmas~\ref{lem:base} and~\ref{lem:ind}, we get that $|\PC_f(G_k, i)| \ge S(k-i, 3)$.
By symmetry, the total number of PMCs of $G_k$ is at least $$\sum_{i=0}^k \binom{k}{i} |\PC_f(G_k, i)| \ge \sum_{i=0}^k \binom{k}{i} S(k-i, 3) \ge 3^{-3} \sum_{i=0}^{k-3} \binom{k}{i} 3^{k-i} = \Omega(4^k).$$
\end{proof}

\section{Conclusion}
We showed that a graph with $n$ vertices and vertex cover of size $k$ has at most $4^k + n$ PMCs.
Furthermore, this bound is tight up to constant factors.
Our result refines the upper bound $O^*(4^k)$ given in~\cite{DBLP:journals/algorithmica/FominLMT18} and closes the open problem of giving a lower bound.

We hope that in addition to showing limitations of the PMC approach, our proofs could yield positive insights on PMCs.
Our proofs show that in the worst case, the vertex cover is an independent set, and that intuitively, the more edges there are within the vertex cover, the less PMCs there are.
This seems to generalize the graph class of split graphs, which have a clique vertex cover and at most $n$ PMCs.

\bibliographystyle{plain}
\bibliography{paper}

\end{document}